\newcommand{\R}{\mathbb R}
\newcommand{\C}{\mathbb C}
\newcommand{\Z}{\mathbb Z}
\newcommand{\T}{\mathbb T}
\newtheorem{theorem}{Theorem}[section]
\newtheorem{lemma}[theorem]{Lemma}
\newtheorem{proposition}[theorem]{Proposition}
\theoremstyle{definition}
\newtheorem{definition}[theorem]{Definition}
\theoremstyle{remark}
\numberwithin{equation}{section}
\newcommand{\NCS}[1]{S^{#1}_{\theta'}}
\newcommand{\sut}{SU(3)_\theta}
\newcommand{\id}{\operatorname{id}}
\title[]{Connes-Landi spheres are homogeneous spaces}
\author{Mitsuru Wilson}
\address[Mitsuru Wilson]{%
Edificio H\\
Universidad de los Andes\\
Bogot\'a, Colombia\\
N6A 5B7\\
Canada}
\email{m.wilson@uniandes.edu.co}
\subjclass[2000]{}
\keywords{}
\begin{document}

\begin{abstract}
In this paper, we review some recent developments of compact quantum groups that arise as $\theta$-deformations of 
compact Lie groups of rank at least two. A $\theta$-deformation is merely a 2-cocycle deformation using an action of a torus of dimension higher than 2. 
Using the formula (Lemma 5.3) developed in \cite{W2018}, we derive
the noncommutative 7-sphere in the sense of Connes and Landi \cite{CL2001} as the fixed-point subalgebra.
\keywords{Noncommutative geometry, quantum homogeneous space, compact quantum group, Connes-Landi deformation, toric noncommutative manifolds, $\theta$-deformation}
\subjclass{14M17, 57T05, 16T05, 11M55}
\end{abstract}

\maketitle

\tableofcontents

\noindent
Lie groups play crucial roles both in mathematics and physics. For example, most field theories of particle physics are based on certain symmetries with gauge group. 
For instance, quantum chromodynamics is a gauge theory with the symmetry group $SU(3)$ while quantum electromagnetism is a gauge theory with the symmetry 
group $U(1)$ and  the standard model is a gauge theory with the symmetry group $SU(3)\times SU(2)\times U(1)$. Also, $SU(5)$ had been proposed as a 
gauge group in grand unification theory. 

The main purpose of this article is to survey the construction of compact quantum groups from compact Lie groups of rank at least 2 using both the left and the right 
action of the torus on the algebra generated by coordinate functions \cite{MR1993.5}. Once we establish an action by $\T^n$, we can apply the 
$\theta$-deformation on the coordinate functions using an antisymmetric bicharacter or a 2-cocycle to deform the multiplication of 
the algebra \cite{CL2001, MR1993.5, MR1993}. These noncommutative manifolds are called $\theta$-deformation because the original work used 
$\theta$ as the parameter.

We consider a noncommutative version of the group $SU(n)$ in the framework of $\theta$-deformation of manifolds introduced 
in \cite{CL2001}. This version of deformations of compact Lie groups to compact quantum 
groups was first discovered by Rieffel in \cite{MR1993.5} using his theory of strict deformation quantization along the action of $\R^n$ \cite{MR1993}, 
which is a 2-cocycle twist of the multiplication of the algebra structure. Later, Connes and Landi in \cite{CL2001} considered isometric actions of $\T^n$ 
on compact spin manifolds to deform the algebra of continuous functions to obtain a noncommutative compact spin manifold. Since in their case, the spectrum 
of the Dirac operator does not change, it is often referred to as the isospectral deformation. More generally, since the new product on the algebra can be defined 
as long as there is an action of the $n$-torus on the algebra, the deformed algebra is also called a toric noncommutative manifold. Although this type of 
deformation was done in the C$^*$-algebraic framework, we will deform only the algebraic part to obtain generators and relations. At the algebraic level we will be 
able to give very explicit formulae in terms of the generators. The resulting compact quantum group will be a compact matrix quantum 
group in the sense of Woronowicz \cite{W1987}.

In \cite{CD2002}, Connes and Dubois-Violette constructed some quantum matrix groups as quotients of the 
$\theta$-deformation $M_{n\times n}(\R)_\theta$ of the algebra generated by the coordinates of $M_{n\times n}(\R)$. 
However, their method does not yield any nontrivial deformation of $SU(n)$ whereas \cite{MR1993.5} yields nontrivial deformations of $SU(n)$, 
$n\geq3$. Since a compact Lie group $G$ of rank higher than 2 admits a nontrivial $\theta$-deformation to a compact quantum group, it is natural to ask 
whether a Lie group action $G\times X\rightarrow X$ on a compact manifold $X$ endowed with a $\T^n$ action, $n\geq2$, 
extends to an action $C(X_{\theta'})\rightarrow C(G_\theta)\otimes C(X_{\theta'})$ of the compact quantum group $G_\theta$. 
Of course, the original action will induce an action at the
vector space level, but, in general, it is no longer an algebra homomorphism. For instance, Landi and van Suijlekom in \cite{LvS2005} studied for exactly which 
values $\theta_{ij}'$ of $S^7_{\theta'}$ the diagonal action of the group $SU(2)$ on $S^7$ extends to an action of $SU(2)$ on $S^7_{\theta'}$. 
On the contrary, the group $SU(2)$ does not admit any nontrivial $\theta$-deformation because the group is of rank 1. 
We recite a lemma from \cite{W2018} in order to determine the dependence of the deformation parameters $\theta$ and $\theta'$, which generalizes the computation 
in \cite{LvS2005}.  Although the group $SU(2)$ does not admit a nontrivial deformation in the framework of \cite{MR1993.5}, a similar construction of Hopf fibration was 
generalized to the quantum group $SU_q(2)$ in \cite{LPR2006}, which is, a priori, a different deformation than our framework.

An interesting consideration of $G_\theta$ is the construction of quantum homogeneous space. Suppose $\T^n\subset K\subset G$, $n\geq2$ with $K$ a 
closed subgroup of the compact Lie group $G$. Using the isomorphism $C(G/K)\cong C(G)^K$, we can define a quantum homogeneous space to be the fixed-point 
subalgebra for the action of $K$. This definition can be generalized to essentially any quantum subgroup acting on the quantum group. 
In our present work, we present an example of the 'quotient' $SU(4)_{\lambda}/SU(3)_\theta$. To make this statement precise, we give an action 
$\rho:SU(4)_{\lambda}\rightarrow SU(4)_{\lambda}\otimes SU(3)_{\theta}$ and compute the invariant elements. 
We show that the subalgebra generated by the invariant elements is isomorphic to a noncommutative 7-sphere $S^7_{\theta'}$ for some $\theta'$.

Quantum homogeneous spaces using the $\theta$-deformation of compact Lie groups had been treated by Varilly in \cite{V2001}. However, Varilly 
does not consider an action of quantum groups nor the fixed-point subalgebra for the action. Rather than computing the fixed-point subalgebra, Varilly endows $C(G/K)$ 
with a new product consistent in a way that it is embedded in $C(G_\theta)$. We take an approach to endow $C(G)$ with the action (by the left 
multiplication) $C(G)\rightarrow C(K)\otimes C(G)$ by its subgroup $K$ then extend the action to the $\theta$-deformation. Only then, we can compute the fixed-point 
subalgebra for the action. This subalgebra will be our notion of quantum homogeneous spaces.

This paper is organized as follows. 
In Section \ref{sec:deformation}, we relate the strict deformation quantization of periodic action in \cite{MR1993} and the $\theta$-deformation in \cite{CD2002}. 
Section \ref{subsec:quantum.group} is devoted to the review of the construction of the $\theta$-deformation of compact quantum groups \cite{MR1993.5}, and we survey 
some main results. In Section \ref{subsec:SU(3)}, we compute relations on the generators of the deformation $SU(3)_\theta$ of $SU(3)$ as a compact quantum matrix group. 
In Section \ref{sec:action.S5}, we restate the necessary and the sufficient condition as to when an action of a group on an algebra of functions extends to the setting of 
$\theta$-deformations \cite[Lemma 5.3]{W2018} as Lemma \ref{lemma:action.condition}. 
We recall an example from \cite{W2018} in 
Section \ref{subsec:action.S5} where it endows the noncommutative 5-sphere $S^5_{\lambda}$ with an action of $SU(3)_\theta$. 
In this case, the fixed-point subalgebra is trivial. 
Finally in Section \ref{subsec:homogeneous.space}, we construct an action 
$\rho:SU(4)_{\lambda}\rightarrow SU(4)_{\lambda}\otimes SU(3)_{\theta}$ of $SU(3)_\theta$ on $SU(4)_{\lambda}$ whose fixed-point subalgebra
$SU(4)_{\lambda}^{SU(3)_\theta}$ is isomorphic to the noncommutative 7-sphere $S^7_{\theta'}$.

\section{\bf The $\theta$-deformation of compact quantum groups}
\label{sec:deformation}

In this section, we review the construction of toric noncommutative manifolds as a special 
case of Rieffel's deformation quantization along the action of $\R^n$ \cite{MR1993} and use it to obtain a deformation of compact Lie group of rank at least 2 \cite{MR1993.5}
in Section \ref{subsec:quantum.group}. 
Deforming the algebraic structure is a simple matter while obtaining a compatible coalgebra structure is more subtle; the 2-cocycle has to be chosen carefully.

\subsection{\bf $\theta$-deformation as Rieffel's strict deformation quantization}
\label{subsec:rieffel.deformation}
\noindent 
The construction of toric noncommutative manifolds proceeds as follows \cite{CL2001, MR1993}.
Given a compact manifold $M$ endowed with an action of an $n$-torus $\T^n$ or equivalently with a periodic action of $\R^n$, 
$n\geq2$, the algebra $C^\infty(M)$ of smooth functions on $M$ can be decomposed into 
isotypic components $C^\infty(M)_{\vec r}=\big\{f\in C^\infty(M):\alpha_t(f)=e^{2\pi it\cdot\vec r}f\big\}$ indexed by $\vec r\in\widehat{\T^n}=\Z^n$. 
A deformed algebra structure can, then, be given by the linear extension of the product of two functions $f_{\vec r}$ and $g_{\vec s}$ in some 
isotypic components. A new product $\times_\theta$ on these elements is given by
\begin{equation}
f_r \times g_s = \chi(r,s)f_r g_s,
\label{eq:new.product}
\end{equation}
where $\chi: \Z^n \times \Z^n \rightarrow \T$ is an antisymmetric bicharacter on the Pontryagin dual 
$\Z^n=\widehat\T^n$ of $\T^n$ i.e. $\overline{\chi(s,r)}=\chi(r,s)$. The bicharacter relation
$$
\chi(r,s+t)=\chi(r,s)\chi(r,t),\qquad\chi(r+s,t)=\chi(r,t)\chi(s,t), \qquad s,t\in\Z^n
$$
ensures the associativity of the new product. 
For instance, 
$$
\chi(r,s) := \exp\left(\pi i~r\cdot\theta s \right)
$$
where $\theta = \left(\theta_{jk}\right)$ is a real antisymmetric $n \times n$ matrix is a typical choice. 
The involution for the new product is given by the complex conjugation 
of the functions. We denote the algebra obtained by extending the new product $\times_\theta$ to all continuous functions by $C(M_\theta)$ \cite{MR1993}.

The definition of a new product in \eqref{eq:new.product} is a discretized version of the Rieffel's deformation quantization \cite{MR1993} 
by viewing the action as a periodic action of $\R^n$. First of all, Rieffel expressed the deformed product of an algebra endowed with an action 
of $\R^n$ in the integral form
\begin{align}
\label{eq:deform.product}
a \times_J b
= \int_{V\times V} \alpha_{Js}(a) \alpha_t(b) ~e^{2\pi is\cdot t} ~ds~dt
\end{align}
where $a$ and $b$ belong to an algebra $A$ and $J$ is a real antisymmetric real $n \times n$ matrix \cite{MR1993}. This integral may be interpreted 
as an oscillatory integral and $\alpha: V \hookrightarrow$Aut$(A)$ is a strongly continuous action of a finite dimensional vector space $V \cong \R^n$ on 
$A$. The oscillatory integral \eqref{eq:deform.product} makes sense a priori only for elements $a,b$ of the smooth subalgebra $A^\infty$ (which forms 
a Fr\'echet algebra) of $A$ under the action $\alpha$ i.e. $v\mapsto\alpha_v(a)$ is smooth. However, Rieffel showed that if $A$ is a $C^*$-algebra, 
then the new algebra $A_J$ can also be given a C$^*$-algebra structure.

First, $A^\infty$ can be given a suitable pre $C^*$-norm for which the product $\times_J$ is 
continuous in a way that the completion with respect to this norm obtains the deformed algebra $A_J$. Thus, the deformed product can be extended 
to the entire algebra $A_J$.

It should be remarked that the smooth subalgebra remains unchanged as vector spaces $(A_J)^\infty = A^\infty$, even though they have different 
products \cite[Theorem 7.1]{MR1993.5}.

If the action $\alpha$ of $V$ is periodic, then $\alpha_v(a)= a$ for all $v$ in some lattice $\Lambda\subset V$ and $a\in A$. $\alpha$ can, then, be 
viewed as an action of the compact abelian group $H = V/\Lambda$. Then, $A^\infty$ admits a decomposition into a direct sum of isotypic components 
indexed by $\Lambda=\widehat H$. It is shown in \cite[Proposition.~2.21]{MR1993} that if $\alpha_s(a_p) = e^{2\pi ip\cdot s} a_p$ and $\alpha_t(b_q) = 
e^{2\pi iq\cdot t} b_q$ with $p,q \in\Lambda$, then \eqref{eq:new.product} defines an associative product.

The toric noncommutative manifolds \cite{CL2001} based on the deformation quantization \eqref{eq:new.product}, as far as the algebra structure is concerned, 
is a special case of Rieffel's strict deformation quantization theory. The same remarks are also made in \cite{Si2001} and \cite{V2001}. We call the $\theta$-deformed 
algebra the toric noncommutative manifold.

\subsection{\bf Compact quantum Lie groups associated with $n$-torus}
\label{subsec:quantum.group}

\noindent
We now review the $\theta$-deformation of compact Lie groups  in \cite{MR1993.5}. 

Let $G$ be a compact Lie group of rank at least $2$ or 
equivalently $\T^n\subset G$, $n\geq2$. We use the natural action $\alpha_{(t,s)}(f)(x)=f\left(t^{-1}xs\right)$ 
of $\T^n\times\T^n$ to deform the algebra $C(G)$ of continuous functions and write down the relations for the matrix coefficients in the case of $G=SU(3)$ and 
$SU(4)$. However, not every choice of deformation of this kind respects the original coalgebra structure \cite{MR1993}. If the antisymmetric matrix $\theta$ 
is chosen to be of the form $K\oplus(-K)$ where $K^T=-K$ is an $n\times n$ real antisymmetric matrix, we would obtain a $\theta$-deformation of compact 
quantum groups.

\begin{definition}
A coalgebra $C$ is an associative unital algebra with linear maps $\triangle:C\rightarrow C\otimes C$ called coproduct and $\epsilon:C\rightarrow\C$ called 
counit such that 
\begin{align*}
(\id\otimes\triangle)\circ\triangle&=(\triangle\otimes\id)\circ\triangle\\
(\epsilon\otimes\triangle)\circ\triangle&=(\triangle\otimes\epsilon)\circ\triangle=\id.
\end{align*}
If these maps are algebra homomorphisms, then $C$ is called a bialgebra. %Furthermore, if $C$ is a C$^*$ algebra we require that these maps are continuous.

If $H$ is a unital bialgebra endowed with an anti-homomorphism $S:H\rightarrow H$ such that 
\begin{align*}
m\circ(\id\otimes S)\circ\triangle=m\circ(S\otimes\id)\circ\triangle=\eta\circ\epsilon
\end{align*}
where $m:H\otimes H\rightarrow H$ is the multiplication map and $\eta:\C\rightarrow H$ is the embedding of the unit element
is called a compact quantum group.
\end{definition}

For example, if $G$ is a compact Lie group, then the usual (C$^*$-)compact quantum group structure can be given as follows:
\begin{align*}
&\triangle:C(G)\rightarrow C(G)\hat\otimes C(G)\cong C(G\times G),\qquad \triangle(f)(x,y)=f(xy)\\
&\epsilon(f)=f(e)\\
&S(f(x))=f(x^{-1})
\end{align*}
where $e\in G$ is the identity element and $\hat\otimes$ is the completed tensor product. Although in this paper we do not address the C$^*$-algebraic setting, 
when looking at the algebra of continuous functions on a compact Lie group, it is the correct setting.

With the notations above, let $K^T= - K$ be a real antisymmetric matrix. It was shown in \cite{MR1993.5} that the formula \eqref{eq:new.product} 
with $\theta=K\oplus(-K)$ with the unaltered coalgebra structure endows the algebra of continuous functions on the 
compact Lie group with a quantum group structure. Other choices of $\theta$ will not afford a quantum group with the original coalgebra structure.
We denote the resulting compact quantum group by $G_\theta$.  We view $\theta_{ij}\in\R$ as parameters of the deformation
Moreover, the deformation $(G_\theta)^\infty$ of smooth functions on $G$ remains dense in $C(G_\theta)$ \cite{MR1993.5}. 
In fact, if $G$ is a compact matrix group, then so too is $G_\theta$ a compact matrix quantum group in the sense of Woronowicz \cite{W1987}.
We will restrict our deformation to the coordinate functions of a compact matrix Lie group in order to be able to compute the commutation relations on them. 

%In this paper we treat $C(G_\theta)$ as a C$^*$-algebra. 
It is shown in \cite{MR1993.5} that the coproduct defined by $\triangle(f)(x,x')=f(xx')$ extends to a continuous homomorphism 
$\triangle:C(G_\theta)\rightarrow C(G_\theta)\otimes C(G_\theta)$. Here, the interpretation of $C(G_\theta)\otimes C(G_\theta)$ 
is given by the isomorphism $C\left((G\times G)_{\theta\oplus\theta}\right)\cong C(G_\theta)\otimes C(G_\theta)$ where the tensor product is the 
minimal tensor product \cite{MR1993} (in fact, $C(G_\theta)$ would be nuclear so distinguishing the kind of tensor product is not necessary in this setting but 
since this deformation applies to any C$^*$ quantum group endowed with an action of $\T^n$, we stress that this analysis is valid for the minimal tensor for 
a future consideration). Moreover, the counit $\epsilon(f)(x)=f(e)$ remain a homomorphism and the coinverse $S(f)(x)=f(x^{-1})$ 
remain an anti-homomorphism, and they satisfy all the compatibility conditions in the deformation. 

Lastly, since the normalized Haar measure determines a linear functional $\mu$ on $C(G)$, $\mu$ becomes a state on $C(G_\theta)$. In fact, this state is a 
tracial state $\mu_\theta:C(G_\theta)\rightarrow\C$ defined by $\mu_\theta(f):=\mu(f)=\int_Gf(x)dx$. Moreover, this tracial state is a Haar state 
\cite[Theorem 4.2]{MR1993.5} as we restate in the following:
\begin{theorem}\label{theorem:Haar.state}
The Haar measure $\mu$ on $C(G)$ determines a Haar state on the quantum group $C(G_\theta)$. 
That is, a continuous linear functional $\mu_\theta$ that is unimodular in the sense that
\begin{align}
\begin{split}
\left(id\otimes\mu_\theta\right)\circ\Delta&=\iota\circ\mu_\theta,\\
\left(\mu_\theta\otimes id\right)\circ\Delta&=\iota\circ\mu_\theta,
\end{split}\\
\begin{split}
\left(id\otimes\mu_\theta\right)\left(1\otimes a\right)\left(\Delta b\right)&=
\left(id\otimes\mu_\theta\right)\left(\left(S\otimes id\right)\Delta a\right)\left(1\otimes b\right),
\end{split}\\
\begin{split}
\mu_\theta\circ S&=\mu_\theta.
\end{split}
\end{align}
\end{theorem}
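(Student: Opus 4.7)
The plan is to exploit the fact that $\mu_\theta=\mu$ as a linear functional while $\Delta$, $\epsilon$, $S$ remain the same as linear maps: the deformation only affects multiplication, so each identity in Theorem~\ref{theorem:Haar.state} reduces, isotypic component by isotypic component, to an identity already true on $C(G)$.

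The first step is to set up the isotypic decomposition for the $\T^n\times\T^n$ action $\alpha_{(t,s)}(f)(x)=f(t^{-1}xs)$. Because $\mu$ is invariant under both left- and right-translation by the torus, the functional $\mu_\theta$ annihilates every isotypic component $f_r$ with $r\ne 0$, so it depends only on the invariant component $f_0$. Positivity, normalization, and continuity transfer from $\mu$ to $\mu_\theta$ because $(G_\theta)^\infty$ is dense in $C(G_\theta)$ and the involution is unchanged; this makes $\mu_\theta$ a well-defined state.

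Next, I dispatch the two "weak" invariance identities and the antipode identity cheaply. The left-hand sides $(\id\otimes\mu_\theta)\circ\Delta$ and $(\mu_\theta\otimes\id)\circ\Delta$ involve only the unaltered linear maps $\Delta$ and $\mu_\theta$, so evaluating at an isotypic $f_r$ and using right/left invariance of the classical Haar measure gives precisely $\iota\circ\mu_\theta$. The identity $\mu_\theta\circ S=\mu_\theta$ is immediate from $S(f)(x)=f(x^{-1})$ together with the unimodularity of Haar measure on a compact group.

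The substantial step, and the main obstacle, is the strong invariance identity, where the deformed product inside $C(G_\theta)\otimes C(G_\theta)\cong C((G\times G)_{\theta\oplus\theta})$ enters genuinely. The plan is: for $a=a_{(p,q)}$ an element of left/right weight $(p,q)$, use a matrix-coefficient expansion to write $\Delta(b_{(p',q')})=\sum_i b^{(1)}_{(p',r_i)}\otimes b^{(2)}_{(-r_i,q')}$, expand $(1\otimes a)(\Delta b)$ with the deformed product, apply $\id\otimes\mu_\theta$, and invoke isotypic annihilation to force the surviving terms to satisfy $r_i+q=0$. The same bookkeeping on the right-hand side, using that $S$ reverses the weight to $(-q,-p)$, yields the same surviving indices. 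What makes the two sides numerically equal is precisely the choice $\theta=K\oplus(-K)$: the bicharacter factors $\chi$ produced by the two tensor-factor products enter with opposite signs via the two blocks and cancel on the surviving locus, reducing the identity to the classical substitution $y\mapsto x^{-1}y$ in the Haar integral. Verifying this cancellation combinatorially is the one nontrivial calculation; once it is done the classical proof lifts verbatim.
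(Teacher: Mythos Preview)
The paper does not supply a proof of this theorem: it is explicitly restated from Rieffel \cite[Theorem~4.2]{MR1993.5} without argument, so there is no in-paper proof to compare against. Your proposal therefore goes beyond what the present article does.

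That said, your strategy is the right one and matches the shape of Rieffel's original argument. Since $\mu_\theta=\mu$, $\Delta$, and $S$ are unchanged as linear maps, the bi-invariance identities and $\mu_\theta\circ S=\mu_\theta$ do reduce immediately to the classical statements. The strong-invariance identity is indeed the only place the deformed product enters nontrivially, and your plan---expand $\Delta b$ into weight components, apply $\id\otimes\mu_\theta$, use isotypic annihilation to pin down the surviving indices, and then check that the bicharacter factors from the two tensor legs cancel precisely because $\theta=K\oplus(-K)$---is exactly how the verification goes. One small caveat: your remark that ``positivity transfers'' is not automatic, since positivity of $\mu_\theta$ means $\mu(a^*\times_\theta a)\ge 0$ and this depends on the deformed product; it follows from the bicharacter being antisymmetric together with torus-invariance of $\mu$, but it is a separate short computation rather than an immediate consequence of $\mu_\theta=\mu$ as functionals.
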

We denote this faithful trace simply by $\mu$ when the presence of $\theta$ is understood. 

In \cite{MR1993.5}, Rieffel determined the unitary dual of $G_\theta$. He showed that irreducible 
representations of $G$ are irreducible representations of $G_\theta$ and vice versa. Here, a representation $\pi$ of the compact Lie group $G$ 
means a continuous group homomorphism 
$\pi:G\rightarrow GL(V)$ for some finite dimensional complex vector space such that $\pi(g)\pi(g)^*=I_{\dim V}=\pi(g)^*\pi(g)$. $\pi$ is called irreducible if there is 
no nontrivial proper subspace $\{0\}\neq W\subset V$ such that $\pi(g)\cdot W\subset W$. In the quantum group case, we dualize the action $(g,v)\mapsto\pi(g)v$. 
A (unitary) representation of the quantum group $G_\theta$ is a unitary element $U\in M_{n}(G_\theta)$, $UU^*=U^*U=I_n$ such that $(id\otimes\triangle)(U)=
U_{12}U_{13}$.

Unlike in the case of classical representation theory, 
if $\pi$ and $\rho$ are representations of $G$ on $V$ and on $W$, respectively, then $V\otimes W$ 
and $W\otimes V$ are equivalent, but the equivalence is not given by the flip map $\sigma_{\pi\rho}:V\otimes 
W\rightarrow W\otimes V$, $\sigma_{\pi\rho}(v\otimes w)=w\otimes v$. 
It is possible to decompose a $G$-representation $V$ into $\T^n$-representations where $\T^n\subset G$. Then, every $v\in V$ admits the decomposition 
$v=\sum_{\vec p\in\Z^n}v_{\vec p}$ where $v_{\vec p}=\int_{\T^n}\alpha_t(v)e^{-2\pi ip\cdot t}dt$.

Then, the equivalence $\pi\otimes\rho\sim\rho\otimes\pi$ is induced by the linear extension of
\begin{align}\label{eq:product.representation}
R_{\pi\rho}(v_{\vec p}\otimes w_{\vec q})= e^{\pi i \theta(\vec p,\vec q)}v_{\vec p}\otimes w_{\vec q}~.
\end{align}
This is not surprising since \cite[Proposition 2.4]{W1987} showed that a compact matrix group with the property that 
if the map $\sigma$ for every pair of representations is an interwining operator then the quantum group is necessarily 
commutative. Moreover, one can view the above map $R_{\pi\rho}$ as the corresponding $\theta$-deformation of the tensor product.

\subsection{\bf $\theta$-deformations of $SU(3)$}
\label{subsec:SU(3)}

\noindent In this section, we explicitly compute the $\theta$-deformation $SU(3)_\theta$ of the Lie group $SU(3)$ using
the approach in Section \ref{subsec:quantum.group}.

We use the maximal torus $\T^2\subset SU(3)$ to deform the algebra $H$ generated by the matrix coordinate functions in $C(SU(3))$.
The coordinate functions $u_{ij}$ of $U\in SU(3)$ satisfy the relation
\begin{align*}
&\sum_{k=1}^3 u_{jk}\overline{u_{jl}}=\delta_{kl},\qquad
\sum_{k=1}^3 \overline{u_{kj}}u_{lj}=\delta_{kl},\qquad \det U=1.
\end{align*}
Let $K=\begin{pmatrix}0 & \theta\\-\theta &0\end{pmatrix}$ and we use the diagonal representation of the maximal torus
$$\T^2=\left\{t=
\begin{pmatrix}
e^{2\pi i\varphi_1} & 0 & 0\\
0 & e^{2\pi i\varphi_2} & 0\\
0 & 0 & e^{2\pi i\varphi_3}
\end{pmatrix}:\varphi_j\in\R,~\varphi_3=-(\varphi_1+\varphi_2)\right\}.
$$
In fact, monomials in coordinates are the isotypic components of the action 
\begin{align*}
&\alpha_{(t,s)}(f(U))=f\left(tUs\right)\\
&=f\begin{pmatrix}
e^{2\pi i\left(\varphi_{1} + \psi_{1}\right)}u_{11} & e^{2\pi i\left(\varphi_{1} + \psi_{2}\right)}u_{12} & e^{2\pi i\left(\varphi_{1} + \psi_{3}\right)}u_{13}\\
e^{2\pi i\left(\varphi_{2} + \psi_{1}\right)}u_{21} & e^{2\pi i\left(\varphi_{2} + \psi_{2}\right)}u_{22} & e^{2\pi i\left(\varphi_{2} + \psi_{3}\right)_{2}}u_{23}\\
e^{2\pi i\left(\varphi_{3} + \psi_{1}\right)}u_{31} & e^{2\pi i\left(\varphi_{3} + \psi_{2}\right)}u_{32} & e^{2\pi i\left(\varphi_{3} + \psi_{3}\right)}u_{33}
\end{pmatrix}
\end{align*}
of $\T^2\times\T^2$. In the above, we omitted taking the inverse of $t$ because $\T^n$ is Abelian. Since $K$ is a 2$\times2$ real antisymmetric matrix, 
there is only one independent parameter present in the deformation, we denote the resulting compact quantum group by $SU(3)_\theta$.
It is evident that each coordinate function is in an isotypic component $A_{\vec n}$, $\vec n={(n_1,n_2,n_3,n_4)}$, of this action. Then using the formula 
\eqref{eq:new.product},
\begin{align*}
u_{ij}\times_\theta u_{kl}=e^{\pi i \theta(- n_1m_2 + n_2m_1 + n_3m_4 - n_4m_3)}u_{ij}u_{kl},
\end{align*}
$u_{ij}\in A_{\vec n}$ and $u_{kl}\in A_{\vec m}$.
Thus, we obtain the following commutation relations:
\begin{alignat*}{3}
&u_{11}u_{12}=e^{-2\pi i\theta}u_{12}u_{11},&\quad& u_{11}u_{13}=e^{2\pi i\theta}u_{13}u_{11},&\quad& u_{11}u_{21}=e^{2\pi i\theta}u_{21}u_{11},\\
&\left[u_{11},u_{22}\right]=0,&&u_{11}u_{23}=e^{4\pi i\theta}u_{23}u_{11},&& u_{11}u_{31}=e^{-2\pi i\theta}u_{31}u_{11},\\
&u_{11}u_{32}=e^{-4\pi i\theta}u_{32}u_{11},&& \left[u_{11},u_{33}\right]=0,&& u_{12}u_{13}=e^{-2\pi i\theta}u_{13}u_{12},\\
&u_{12}u_{21}=e^{4\pi i\theta}u_{21}u_{12},&& u_{12}u_{22}=e^{2\pi i\theta}u_{22}u_{12},&& \left[u_{12},u_{23}\right]=0,\\
&\left[u_{12},u_{31}\right]=0,&& u_{12}u_{32}=e^{-2\pi i\theta}u_{32}u_{12},&& u_{12}u_{33}=e^{-4\pi i\theta}u_{33}u_{12},\\
&\left[u_{13},u_{21}\right]=0,&& u_{13}u_{22}=e^{4\pi i\theta}u_{22}u_{13},&& u_{13}u_{23}=e^{2\pi i\theta}u_{23}u_{13},\\
&u_{13}u_{31}=e^{-4\pi i\theta}u_{31}u_{13},&& \left[u_{13},u_{32}\right]=0,&& u_{13}u_{33}=e^{-2\pi i\theta}u_{33}u_{13},\\
&u_{21}u_{22}=e^{-2\pi i\theta}u_{22}u_{21},&& u_{21}u_{23}=e^{2\pi i\theta}u_{23}u_{21},&& u_{21}u_{31}=e^{2\pi i\theta}u_{31}u_{21},\\
&\left[u_{21},u_{32}\right]=0,&& u_{21}u_{33}=e^{4\pi i\theta}u_{33}u_{21},&& u_{22}u_{23}=e^{-2\pi i\theta}u_{23}u_{22},\\
&u_{22}u_{31}=e^{4\pi i\theta}u_{31}u_{22},&& u_{22}u_{32}=e^{2\pi i\theta}u_{32}u_{22},&& \left[u_{22},u_{33}\right]=0,\\
&\left[u_{23},u_{31}\right]=0,&& u_{23}u_{32}=e^{4\pi i\theta}u_{32}u_{23},&& u_{23}u_{33}=e^{2\pi i\theta}u_{33}u_{23},\\
&u_{31}u_{32}=e^{-2\pi i\theta}u_{32}u_{31},&& u_{31}u_{33}=e^{2\pi i\theta}u_{33}u_{31},&& u_{32}u_{33}=e^{-2\pi i\theta}u_{33}u_{32}.
\end{alignat*}

The coalgebra structure restricted these elements is given by
\begin{align*}
\Delta(u_{ij})=\sum_{k=1}^3u_{ik}\otimes u_{kj},&\quad&\epsilon(u_{ij})=\delta_{ij},&\quad&S(u_{ij})=u_{ji}^*.
\end{align*}
Thus, $\sut$ is a compact quantum group.

While we consider only the $*$-algebra generated by elements to be our compact quantum group, Theorem 3.9 of \cite{SW1996} shows that 
the algebra generated by $\{u_{ij}\}$ is dense in $\sut$, and therefore, that $\sut$ can be completed to a compact matrix quantum group with 
the C$^*$-algebra structure.

%%%%%%%%%%%%%%%% Section 3 %%%%%%%%%%%%%%%%

\section{\bf actions of $\sut$}
\label{sec:action.S5}

In this section we present examples of actions of $\sut$. Classically, $SU(3)$ acts most naturally on $S^5$ (transitively) and 
on $SU(4)$ as a subgroup. In the first subsection, we review the action of $\sut$ on $\NCS5$ as a compact quantum group. This example was also 
presented in \cite{W2018}, but it remains relevant for the present article because the fixed-point subalgebra for this action is shown to be trivial. Thus, 
it gives one way of generalizing homogeneous spaces. On the other hand, the action of $\sut$ on $SU(4)_{\lambda}$ 
as a quantum group is a new example in Section \ref{subsec:homogeneous.space}. In this example, we derive the noncommutative 7-sphere 
$S^7_{\theta'}$ as the fixed-point algebra for this action. Therefore, it is a homogeneous space. In the noncommutative geometric framework, 
these two generalizations of homogeneous spaces seem to be different. 

\subsection{\bf action of $\sut$ on $\NCS5$}
\label{subsec:action.S5}
\noindent 
We construct the action of $\sut$ on $\NCS5$ \cite{W2018}. It is a generalization of the classical action of $SU(3)$ on $S^5$. 
The fixed-point subalgebra is trivial, which is analogous to the classical action of $SU(3)$ on $S^5$. However, the extent to 
which it remains an action in the deformation depends on the choice of the parameters of the deformations. 

An odd dimensional noncommutative (2n-1)-sphere $\NCS{2n-1}$ is a $\theta$-deformation of the algebra generated by the coordinate functions on the 
(2n-1)-sphere $S^{2n-1}$, which can be constructed from the action of $\T^n$. It is the $*$-algebra generated by $n$ normal elements $z_1,z_2,\ldots,z_n$ 
satisfying the commutation relations
\begin{align*}
z_jz_k=e^{2\pi i\lambda_{jk}}z_kz_j,\qquad z_jz_k^*=e^{-2\pi i\lambda_{jk}}z_k^*z_j,\qquad \sum_{k=1}^n z_kz_k^*=1
\end{align*}
where $\lambda_{jk} = - \lambda_{kj}$. These spheres are often referred to as Connes-Landi spheres.
%This is the same odd dimensional noncommutative sphere introduced by Natsume and Olsen in 
%\cite{Toeplitz Operators on Noncommutative Spheres and an Index Theorem}.

\begin{definition}
Let $H$ be a compact quantum group and $A$ an algebra. An algebra homomorphism
\begin{align*}
\rho:A\rightarrow H\otimes A
\end{align*}
is called a (left) action of the compact quantum group if 
\begin{enumerate}[i)]
\item $\left(\Delta\otimes id\right)\circ\rho=\left(id\otimes \rho\right)\circ\rho$ and 
\item $\left(\epsilon\otimes id\right)\circ\rho=id$.
\end{enumerate}
The set denoted by $A^H:=\{a\in A:\rho(a)=1\otimes a\}$ is called the fixed-point subalgebra for the action by $H$. 
The elements of $A^H$ are called invariant elements.
\end{definition}

We restate the following criterion, which gives the necessary and sufficient condition of which an action of the undeformed 
algebras extends to the deformed setting \cite{W2018}.

\begin{lemma}\label{lemma:action.condition}
Let $A$ be an algebra and $H$ a compact quantum group both equipped with $n$-torus $\T^n$ actions, $\alpha_t$ and $\beta_s$, respectively. Let 
$\lambda$ and $\theta=K\oplus(-K)$ be real antisymmetric matrices. 
Suppose $\rho:A\rightarrow H\otimes A$ is an action of the quantum group $H$ on $A$. 
Then, it extends to an action $\rho_{\theta,\lambda}:A_\lambda\rightarrow H_{\theta}\otimes A_{\lambda}$ 
if and only if for each $a=\underset{\vec n}{\sum}a_{\vec n}$ and $b=\underset{\vec m}{\sum}b_{\vec m}$ the following equation holds.
\begin{align}\label{eq:action.condition}
\begin{split}
&e^{\pi i\lambda(\vec n,\vec m)}\int_{\T^n\times \T^n}\rho(\alpha_t(a_{\vec n}))\rho(\alpha_s(b_{\vec m}))e^{-2\pi i(t\cdot\vec n + s\cdot\vec m)}\\
&=\sum_{(\vec{n'},\vec{p}),(\vec{m'},\vec{q})}
e^{2\pi i(\theta(\vec{n'},\vec{m'})+\lambda(\vec{p},\vec{q}))}\times\\
&\qquad\qquad\int_{\T^{2n}\times\T^{2n}}(\beta_{t'}\otimes\alpha_t)(\rho(a_{n}))(\beta_{s'}\otimes\alpha_s)(\rho(b_{n}))\times\\
&\qquad\qquad\qquad e^{-2\pi i(t'\cdot\vec{n'} + t\cdot\vec{p} + s'\cdot\vec{m'} + s\cdot\vec{q})}dt'dtds'ds
\end{split}
\end{align}
for each $\vec n$ and $\vec m$ in $\Z^n\times\Z^n$. 
Moreover, the fixed-point subalgebra $A_{\lambda}^{H_\theta}\subset A_{\lambda}$ is isomorphic to $A^H$.
\end{lemma}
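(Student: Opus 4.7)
The plan is to reduce the statement to verifying a single algebraic identity, namely that $\rho$ is still a homomorphism after twisting the source and target products. Recall from Section \ref{subsec:rieffel.deformation} that the smooth subalgebras satisfy $(A_\lambda)^\infty = A^\infty$ and $(H_\theta)^\infty = H^\infty$ as vector spaces, so the candidate map $\rho_{\theta,\lambda}$ is just the linear map $\rho$ itself. Coassociativity $(\Delta_\theta \otimes \id) \circ \rho_{\theta,\lambda} = (\id \otimes \rho_{\theta,\lambda}) \circ \rho_{\theta,\lambda}$ and counitality transfer from the undeformed identities because $\Delta$, $\epsilon$, and $\rho$ are $\T^n$-equivariant for the relevant actions and the two sides of these axioms sit in matching isotypic components, so the twisting phases cancel. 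Hence the only nontrivial requirement is multiplicativity:
\begin{equation*}
\rho(a \times_\lambda b) = \rho(a) \times_{\theta \oplus \lambda} \rho(b).
\end{equation*}

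By linearity it suffices to test this on isotypic components $a_{\vec n}$ and $b_{\vec m}$ under $\alpha$. First I would unpack the left-hand side of \eqref{eq:action.condition}. Since $\alpha_t(a_{\vec n}) = e^{2\pi i t \cdot \vec n} a_{\vec n}$ and $\rho$ is an ordinary algebra homomorphism in the undeformed setting, the double integral collapses and the LHS equals $e^{\pi i \lambda(\vec n, \vec m)} \rho(a_{\vec n}) \rho(b_{\vec m}) = \rho(a_{\vec n} \times_\lambda b_{\vec m})$. Next I would unpack the right-hand side: the integrals $\int (\beta_{t'} \otimes \alpha_t) \rho(a_{\vec n}) e^{-2\pi i(t' \cdot \vec{n'} + t \cdot \vec p)} dt' dt$ project $\rho(a_{\vec n})$ onto its $(\vec{n'}, \vec p)$-isotypic component for the $\T^n \times \T^n$ action on $H \otimes A$, and weighting these bigraded pieces by the phase $e^{\pi i(\theta(\vec{n'}, \vec{m'}) + \lambda(\vec p, \vec q))}$ and summing reproduces the deformed product in $H_\theta \otimes A_\lambda \cong (H \otimes A)_{\theta \oplus \lambda}$. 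Thus \eqref{eq:action.condition} is literally the component-by-component statement of multiplicativity, and the equivalence is immediate in both directions.

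For the fixed-point claim, since $\rho_{\theta,\lambda}$ and $\rho$ agree as linear maps and the underlying vector space of $H_\theta \otimes A_\lambda$ is that of $H \otimes A$, the conditions $\rho_{\theta,\lambda}(a) = 1 \otimes a$ and $\rho(a) = 1 \otimes a$ are identical, giving a set-theoretic bijection $A_\lambda^{H_\theta} \leftrightarrow A^H$. To upgrade this to an algebra isomorphism, I would argue that for invariant $a$, the equation $\rho(a) = 1 \otimes a$ forces $a$ to lie in the $\vec 0$-isotypic component of $\alpha$ (pulled back through $\rho$ from the $\beta \otimes \alpha$ grading), so the twist factor $e^{\pi i \lambda(\vec 0, \vec m)} = 1$ and $a \times_\lambda b = ab$ for any other invariant $b$.

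The hard part will be the bookkeeping between the two torus actions $\alpha$ on $A$ and $\beta$ on $H$ as they interact through $\rho$: one must keep track of four indices $(\vec{n'}, \vec p)$ and $(\vec{m'}, \vec q)$ on the right against the two indices $(\vec n, \vec m)$ on the left, and verify that the equivariance of $\rho$ forces the projections on the right to vanish unless the $\vec n' + \vec p$ matches $\vec n$ (and similarly for $\vec m$). Once this compatibility is made precise, the oscillatory integrals collapse to finite sums and the identity reduces to the simple phase equality above.
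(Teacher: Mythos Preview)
Your reduction of the lemma to the multiplicativity identity $\rho(a \times_\lambda b) = \rho(a) \times_{\theta\oplus\lambda} \rho(b)$, and your identification of the two sides of \eqref{eq:action.condition} with the isotypic expansions of $\rho(a_{\vec n} \times_\lambda b_{\vec m})$ and $\rho(a_{\vec n}) \times_{\theta\oplus\lambda} \rho(b_{\vec m})$, is exactly the paper's argument. The paper is equally brief on coassociativity and counitality, simply noting that $\Delta$ and $\rho$ are unchanged as linear maps, so on the main body of the proof you are aligned with the paper.

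There is, however, a genuine error in your treatment of the fixed-point subalgebra. Your claim that $\rho(a) = 1 \otimes a$ forces $a$ into the $\vec 0$-isotypic component of $\alpha$ is false: in the paper's own main example (Proposition~\ref{proposition:homogeneous.space}) the invariants are the elements $v_{4l}\in SU(4)_\lambda$, which carry nonzero weights for the $\T^3\times\T^3$ action, and the resulting fixed-point subalgebra $S^7_{\theta'}$ is genuinely noncommutative (e.g.\ $x_1 x_2 = e^{-2\pi i\theta} x_2 x_1$). So $\times_\lambda$ does \emph{not} restrict to the undeformed product on invariants, and $A_\lambda^{H_\theta}$ is not isomorphic \emph{as an algebra} to the undeformed $A^H$. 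The paper's proof in fact claims less than you try to prove: it only observes that the set of invariant elements is unchanged (since $\rho$ is the same linear map) and that this set is closed under $\times_\lambda$ (since $\rho$ is multiplicative for the deformed products). Read the word ``isomorphic'' in the statement at the level of underlying vector spaces, or as saying that $A_\lambda^{H_\theta}$ is the $\lambda$-deformation of $A^H$ sitting inside $A_\lambda$; your attempt to upgrade it to an identity of algebra structures cannot succeed.
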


\begin{proof}
Note that $a_{\vec n}=\int_{\T^n}\alpha_t(a)e^{-2\pi i t\cdot\vec n}$. For $\rho$ to be a homomorphism with respect to the new products, we need to have 
\[
\rho(a\times_{\lambda}b)=\rho(a)\times_{\theta\oplus\lambda}\rho(b).
\]
The left hand side gives
\begin{align*}
\rho(a\times_{\lambda}b)&=\rho\left(\sum_{\vec{n},\vec{m}}e^{\pi i\lambda(\vec{n,}\vec{m})}a_{\vec{n}}b_{\vec{m}}\right)\\
&=\sum_{\vec{n},\vec{m}}e^{\pi i\lambda(\vec{n,}\vec{m})}\rho(a_{\vec{n}})\rho(b_{\vec{m}})\\
&=\sum_{\vec{n},\vec{m}}e^{\pi i\lambda(\vec{n,}\vec{m})}\int_{\T^{n}\times\T^{n}}\rho(\alpha_t(a_{\vec n}))\rho(\alpha_s(b_{\vec m}))e^{-2\pi i(t\cdot\vec{n} 
+ s\cdot\vec{m})}dtds
\end{align*}
while the right hand side gives
\begin{align*}
\rho(a)\times_{\theta\oplus\lambda}\rho(b)&=\left(\sum_{\vec{n}}\rho(a_{\vec{n}})\right)\times_{\theta\oplus\lambda}\left(\sum_{\vec{m}}\rho(b_{\vec{m}})\right)\\
&=\sum_{\vec{n},\vec{m}}\rho(a_{\vec{n}})\times_{\theta\oplus\lambda}\rho(b_{\vec{m}})\\
&=\sum_{\vec{n},\vec{m}}\sum_{(\vec{n'},\vec{p}),(\vec{m'},\vec{q})}e^{2\pi i(\theta(\vec{n'},\vec{m'})+\lambda(\vec{p},\vec{q}))}\times\\
&\qquad\int_{\T^{2n}\times\T^{2n}}(\beta_{t'}\otimes\alpha_t)(\rho(a_{\vec n}))(\beta_{s'}\otimes\alpha_s)(\rho(b_{\vec m}))\times\\
&\qquad e^{-2\pi i(t'\cdot\vec{n'} + t\cdot\vec{p} + s'\cdot\vec{m'} + s\cdot\vec{q})}dt'dtds'ds~,
\end{align*}
which shows that $\rho$ is a homomorphism if and only if this condition \eqref{eq:action.condition} is satisfied. 

To show that $\rho$ is an action, note that the coproduct $\Delta$ and the action itself $\rho$ are unchanged. Therefore, the conditions i) and ii) of 
left action are automatically satisfied.

Since the action $\rho$ is unchanged, we see that the invariant elements remain unchanged and $A_{\lambda}^{H_\theta}$ is a subalgebra because 
$\rho$ is an algebra homomorphism. 
\end{proof}

Although the proof is simple, the previous lemma an important criterion of when an action extends to the deformed algebras. It suffices to show 
that the generators satisfy \eqref{eq:action.condition}.

For instance, using $S^5\subset\C^3$, the action of $SU(3)$ on $S^5$ can be given by the matrix multiplication $(z_1,z_2,z_3)^T\mapsto U(z_1,z_2,z_3)^T$, 
$(z_1,z_2,z_3)^T\in S^5\subset\C^3$ and $U\in SU(3)$. The dual version of this action is the action given by $z_j\mapsto\sum_{k=1}^3u_{jk}\otimes z_k$.

Let $H_\theta=\sut$ and $A_{\lambda}=\NCS5$. Then, an action of the compact quantum group $H_\theta$ on $A_{\lambda}$ 
can be given by the following proposition.

\begin{proposition}\label{proposition:cotransitive}
Let $\lambda=(\lambda_{ij})$ be an real antisymmetric matrix whose entries satisfy $  \lambda_{12} = - \lambda_{13} =  \lambda_{23}=\theta$. 
Then for this particular choice of the values of the parameters, % of $A_{\lambda}$ and $H_\theta$, 
the linear map $\delta:A_\lambda\rightarrow H_\theta\otimes A_\lambda$ defined by $\delta(z_j)=\sum_{k=1}^3u_{jk}\otimes z_k$ 
is a left action of the compact quantum group $H_\theta$ on the algebra $A_{\lambda}$. 
This action is ergodic in the sense that $A_{\lambda}^{H_\theta}\cong\C$.
\end{proposition}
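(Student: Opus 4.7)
The plan is to invoke Lemma~\ref{lemma:action.condition} twice: first to reduce the coaction property of $\delta$ to a finite set of identities on the generators, and then (via its \emph{moreover} clause) to conclude ergodicity from the classical transitivity of $SU(3)$ on $S^5$.

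For the first step, each $z_j$ is homogeneous of weight $e_j\in\Z^3$ under the $\T^3$-action on $A_\lambda$, and each $u_{jk}$ is homogeneous under the $\T^2\times\T^2$-action on $C(SU(3))$ described in Section~\ref{subsec:SU(3)}. Substituting $a=z_j$ and $b=z_k$ into \eqref{eq:action.condition} collapses the integrals to delta functions at the relevant weights, and the coaction condition reduces to the family of identities
\[
u_{ja}u_{kb}=e^{2\pi i(\lambda_{jk}-\lambda_{ab})}u_{kb}u_{ja}\qquad\text{in }\sut,
\]
together with the analogous identity arising from the pair $(z_j,z_k^*)$, for all $j,k,a,b\in\{1,2,3\}$. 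Cross-referencing these against the explicit commutation table for $\sut$ in Section~\ref{subsec:SU(3)} shows that they all hold precisely when $\lambda_{12}=-\lambda_{13}=\lambda_{23}=\theta$: for instance, $(j,k,a,b)=(1,1,1,2)$ forces $\lambda_{12}=\theta$, the pair $(1,2,1,3)$ forces $\lambda_{12}-\lambda_{13}=2\theta$, and $(2,3,1,2)$ forces $\lambda_{23}=\theta$. I expect this bookkeeping, spanning on the order of $9\times 9$ index tuples, to be the main but entirely mechanical part of the proof.

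Next, the sphere relation $\sum_k\delta(z_k)\delta(z_k)^*=1\otimes 1$ follows from $UU^*=I$ in $\sut$, i.e.\ $\sum_a u_{ja}u_{la}^*=\delta_{jl}$, combined with the vanishing of the diagonal of $\lambda$ (which makes the twist phases between $z_a$ and $z_a^*$ trivial). The coaction axioms $(\Delta\otimes\id)\circ\delta=(\id\otimes\delta)\circ\delta$ and $(\epsilon\otimes\id)\circ\delta=\id$ reduce on generators to the coalgebra formulas $\Delta(u_{ij})=\sum_k u_{ik}\otimes u_{kj}$ and $\epsilon(u_{ij})=\delta_{ij}$, and extend to all of $A_\lambda$ by multiplicativity now that $\delta$ is known to be an algebra homomorphism.

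Finally, ergodicity is immediate from the \emph{moreover} clause of Lemma~\ref{lemma:action.condition}: the fixed-point algebra $A_\lambda^{H_\theta}$ is isomorphic to the undeformed $A^H=C(S^5)^{SU(3)}$, which equals $\C$ because $SU(3)$ acts transitively on $S^5$.
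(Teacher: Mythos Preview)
Your proposal is correct and follows essentially the same route as the paper: both reduce the question to Lemma~\ref{lemma:action.condition} on generators, match the resulting phase conditions against the commutation table of $\sut$, and read off the constraint $\lambda_{12}=-\lambda_{13}=\lambda_{23}=\theta$; ergodicity then comes from the ``moreover'' clause and the classical transitivity of $SU(3)$ on $S^5$. The paper carries out only the sample computation of $\delta(z_1)\times_{\theta\oplus\lambda}\delta(z_2)$ and declares the rest similar, whereas you package the check as the single family of identities $u_{ja}u_{kb}=e^{2\pi i(\lambda_{jk}-\lambda_{ab})}u_{kb}u_{ja}$ and also explicitly address the sphere relation and the $*$-compatibility---points the paper leaves implicit.
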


\begin{proof}
We show that the equation \eqref{eq:action.condition} holds for those choices of values of $\lambda_{jk}$. For instance, 
\begin{align*}
\delta(z_1)&=u_{11}\otimes z_1+u_{12}\otimes z_2+u_{13}\otimes z_3\\
\delta(z_2)&=u_{21}\otimes z_1+u_{22}\otimes z_2+u_{23}\otimes z_3.
\end{align*}
Now,
\begin{align*}
\delta(z_1\times_{\lambda}z_2) = e^{\pi i\lambda_{12}} \delta(z_1z_2)= 
\delta(z_1)\times_{\theta\oplus\lambda}\delta(z_2)
\end{align*}
while 
\begin{align}
\begin{split}
\label{eq:coactionS5}
&\delta(z_1)\times_{\theta\oplus\lambda}\delta(z_2)\\
&=e^{\pi i\theta}u_{11}u_{21}\otimes z_1z_1+e^{\pi i\lambda_{12}}u_{11}u_{22}\otimes z_1z_2+e^{\pi i(2\theta + \lambda_{13})}u_{11}u_{23}\otimes z_1z_3\\
&+e^{\pi i(2\theta - \lambda_{12})}u_{12}u_{21}\otimes z_2z_1 + e^{\pi i\theta}u_{12}u_{22}\otimes z_2z_2+e^{\pi i\lambda_{23}}u_{12}u_{23}\otimes z_2z_3\\
&+e^{ - \pi i\lambda_{13}}u_{13}u_{21}\otimes z_3z_1+e^{\pi i(2\theta - \lambda_{23})}u_{13}u_{22}\otimes z_3z_2+e^{\pi i\theta}u_{13}u_{23}\otimes z_3z_3.
\end{split}
\end{align}
From such relation \eqref{eq:coactionS5}, the necessary condition for the values of $\lambda=\left(\lambda_{jk}\right)$ are already restricted to 
$\lambda_{12}= - \lambda_{13}=\lambda_{23}=\theta$. The equation \eqref{eq:action.condition} shows that it is enough to prove such relations for the 
isotypic components of the algebra. The commutation relations for other generators can be ccomputed similarly. Thus, $\delta$ is an action whose 
fixed-point subalgebra is $\C$ by Lemma \ref{lemma:action.condition}.
\end{proof}

\subsection{\bf $\theta$-deformation of 7-sphere as a homogeneous space}
\label{subsec:homogeneous.space}

\noindent
In this subsection, we give the noncommutative 7-sphere $S^7_{\theta'}$ an interpretation of a homogeneous space. We achieve this notion by interpreting 
$S^7_{\theta'}$ as the fixed-point subalgebra for the action by $SU(3)_\theta$ on $SU(4)_{\lambda}$. More generally, we can consider homogeneous spaces 
$M=G/K$ where $G$ is a compact Lie group and $K$ is a closed subgroup. Using the isomorphism $C(G)^K\rightarrow C(M)$ of algebras between the algebra $C(M)$ of functions on $M$ 
the $K$-invariant functions $C(G)$, the algebra of functions on the homogeneous space is defined to be $C(G)^K$. This notion can be generalized to the noncommutative setting.
%It can be shown that this is an isomorphism of C$^*$-algebras of continuous functions. %Thus, we make the following definition.

%\begin{definition}
%Let $H$ be a compact quantum group and $K$ be a subalgebra such that 
%\end{definition}

A 3-parameter deformation $SU(4)_{\lambda}$ of $SU(4)$ using the action of its maximal torus $\T^3$ can be computed using the same method for 
$SU(3)_\theta$ in Section \ref{subsec:SU(3)}. Now we can determine exactly when $SU(3)_\theta$ acts on $SU(4)_{\lambda}$ using Lemma 
\ref{lemma:action.condition} and explicitly determine the invariant 
elements. The proof is essentially identical to Proposition \ref{proposition:cotransitive} with a 
minor modification. As a subgroup, $SU(3)\subset SU(4)$ acts on $SU(4)$ by the left matrix multiplication on the left upper 3$\times$3 block. The following 
theorem is the quantum counterpart of this action.

\begin{proposition}
\label{proposition:homogeneous.space}
Let $u_{ij}\in SU(3)_\theta$ and $v_{kl}\in SU(4)_{\lambda}$. The map defined by 
\begin{align*}
\rho(v_{kl})&=\sum_{\alpha=1}^3u_{k\alpha}\otimes v_{\alpha l},~~~~k,l=1,2,3\\
\rho(v_{4l})&=1\otimes v_{4l},~~~~k,l=1,2,3,4
\end{align*}
is an action $\rho:SU(4)_{\lambda}\rightarrow SU(3)_{\theta}\otimes SU(4)_{\lambda}$ %(algebra homomorphism) 
if and only if $\theta=\lambda_{12}=-\lambda_{13}=\lambda_{23}$. In this case, the algebra $B=\left(SU(4)_{\lambda}\right)^{SU(3)_\theta}$ of invariant elements is generated by 
$$
\left\{1\otimes v_{4l}:l=1,2,3,4\right\}.
$$
\end{proposition}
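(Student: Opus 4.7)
The plan is to follow the template of Proposition \ref{proposition:cotransitive} and apply Lemma \ref{lemma:action.condition} to reduce the question to a phase identity on generators. First I would observe that the undeformed analogue of $\rho$ (at $\theta=0$, $\lambda=0$) is dual to the left multiplication $SU(3)\times SU(4)\to SU(4)$, $(A,U)\mapsto\bigl(\begin{smallmatrix}A&0\\0&1\end{smallmatrix}\bigr)U$, so classically $\rho$ is an algebra homomorphism satisfying the coaction axioms; the counit identity follows from $\epsilon(u_{k\alpha})=\delta_{k\alpha}$ and coassociativity from $\Delta(u_{k\alpha})=\sum_\beta u_{k\beta}\otimes u_{\beta\alpha}$, with the fourth-row case being trivial. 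Since Lemma \ref{lemma:action.condition} guarantees that the two coaction axioms persist in the deformed setting, the only thing to verify is that $\rho$ respects the deformed products.

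To check this, it suffices to verify \eqref{eq:action.condition} on pairs of generators $v_{kl}, v_{mn}$. When $k=4$ or $m=4$ the corresponding factor $\rho(v_{4\cdot})=1\otimes v_{4\cdot}$ has trivial weight on the $SU(3)_\theta$-leg, so the right-hand side of \eqref{eq:action.condition} collapses to the $\lambda$-phase on the $SU(4)_\lambda$-leg and the condition holds automatically. For $k,m\leq 3$ I would compute
\begin{align*}
\rho(v_{kl}\times_\lambda v_{mn}) &= e^{\pi i\lambda(\mathrm{wt}(v_{kl}),\mathrm{wt}(v_{mn}))}\sum_{\alpha,\beta=1}^{3}u_{k\alpha}u_{m\beta}\otimes v_{\alpha l}v_{\beta n},\\
\rho(v_{kl})\times_{\theta\oplus\lambda}\rho(v_{mn}) &= \sum_{\alpha,\beta=1}^{3}e^{\pi i\,\theta(\mathrm{wt}(u_{k\alpha}),\mathrm{wt}(u_{m\beta}))+\pi i\,\lambda(\mathrm{wt}(v_{\alpha l}),\mathrm{wt}(v_{\beta n}))}\,u_{k\alpha}u_{m\beta}\otimes v_{\alpha l}v_{\beta n},
\end{align*}
and then equate the coefficient of each monomial $u_{k\alpha}u_{m\beta}\otimes v_{\alpha l}v_{\beta n}$. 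A representative choice such as $(k,l,m,n)=(1,1,2,2)$ already forces $\lambda_{12}=\theta$, while choices involving the third column yield $-\lambda_{13}=\theta$ and $\lambda_{23}=\theta$. Conversely, once these three equations hold, one verifies that the remaining pairs satisfy the phase match automatically, which establishes the if-and-only-if assertion.

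For the fixed-point subalgebra, Lemma \ref{lemma:action.condition} identifies $B$ with the classical invariant subalgebra $C(SU(4))^{SU(3)}$ at the vector-space level, equipped with the multiplication inherited from $SU(4)_\lambda$. Classically, the left multiplication action of $SU(3)\subset SU(4)$ has orbit space homeomorphic to $S^7$ via the map sending $U\in SU(4)$ to its fourth row $(v_{41},v_{42},v_{43},v_{44})$: the fourth row is manifestly fixed by left multiplication by $\bigl(\begin{smallmatrix}A&0\\0&1\end{smallmatrix}\bigr)$, and any two matrices sharing a fourth row differ by a unique such element (extend the fourth row to an orthonormal basis of $\C^4$ in two ways and compare). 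Consequently $C(SU(4))^{SU(3)}$ is generated as a $*$-algebra by $v_{41},v_{42},v_{43},v_{44}$ subject to $\sum_l v_{4l}v_{4l}^*=1$, and this assertion transports directly to the claim that $B$ is generated by $\{1\otimes v_{4l}:l=1,2,3,4\}$.

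The main obstacle is the systematic bookkeeping in the second step: one must check that every pair of generators --- not just the distinguished triples that produce the three equations --- is compatible with the single constraint $\theta=\lambda_{12}=-\lambda_{13}=\lambda_{23}$. This amounts to tracking the $\T^2\times\T^2$-weights of the $u_{ij}$'s against the $\T^3\times\T^3$-weights of the $v_{kl}$'s and verifying that the resulting phases cancel uniformly across the double sum above.
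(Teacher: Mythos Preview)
Your approach is the same as the paper's: the paper simply says the proof is ``essentially identical to Proposition \ref{proposition:cotransitive} with a minor modification,'' i.e.\ apply Lemma \ref{lemma:action.condition}, check the phase identity \eqref{eq:action.condition} on pairs of generators, and read off the fixed-point subalgebra from the classical one. Your treatment of the coaction axioms and of the invariant subalgebra via $C(SU(4))^{SU(3)}\cong C(S^7)$ is correct and more detailed than anything the paper writes down.

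There is one small gap. You assert that when $k=4$ (or $m=4$) the condition ``holds automatically'' because $1$ has trivial weight on the $SU(3)_\theta$-leg. The $\theta$-phase is indeed trivial there, but the two $\lambda$-phases need not match: on the left you get $e^{\pi i\lambda(\mathrm{wt}(v_{4l}),\,\mathrm{wt}(v_{mn}))}$, while on the right, after expanding $\rho(v_{mn})=\sum_\alpha u_{m\alpha}\otimes v_{\alpha n}$, the phase in the $\alpha$-summand is $e^{\pi i\lambda(\mathrm{wt}(v_{4l}),\,\mathrm{wt}(v_{\alpha n}))}$. Equality for all $\alpha$ forces $K(e_4,e_m)=K(e_4,e_\alpha)$ for $m,\alpha\in\{1,2,3\}$ (with $e_4=-e_1-e_2-e_3$), and a short computation shows this is equivalent to $\lambda_{12}=-\lambda_{13}=\lambda_{23}$. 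So the fourth-row pairs are not free: they reproduce the same constraints on $\lambda$ (though none on $\theta$). This does not affect your ``only if'' direction, since the $k,m\le 3$ cases already pin down $\theta=\lambda_{12}=-\lambda_{13}=\lambda_{23}$, and for the ``if'' direction the fourth-row cases are then satisfied; but the sentence claiming automaticity should be replaced by this check.
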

Set $x_l=1\otimes v_{4l}$. Then the algebra $B$ generated by $x_l$ is isomorphic to the noncommutative 7-sphere $S^7_{\theta'}$ as in \cite{C1994,CL2001,LvS2005} with 
$$
\theta'=
\begin{pmatrix}
0       &  -\theta  & \theta &0\\
\theta    & 0 &  -\theta  & 0\\
-\theta   & \theta  & 0 & 0\\
0  & 0 & 0 & 0\\
\end{pmatrix}.
$$ 
Exact relations can be computed using the commutation relations of $SU(4)_{\lambda}$ and $\theta=\lambda_{12}=-\lambda_{13}=\lambda_{23}$ as in the above theorem:

\begin{alignat}{11}
\nonumber
&x_1x_2=e^{-2\pi i\theta}x_2x_1,&&\quad&& x_1x_3=e^{2\pi i\theta}x_3x_1,&&\quad&& x_1x_4=x_4x_1,&\\
\nonumber
&x_2x_3=e^{-2\pi i\theta}x_3x_2,&&\quad&& x_2x_4=x_4x_2,&& \quad&& x_3x_4=x_4x_3,&\\
\nonumber 
&x_1x_1^*+x_2x_2^*+ x_3x_3^*+x_4x_4^*=1.%&&&&&&&&&
\end{alignat}

Although the $\theta$-deformations of compact manifolds had been considered as homogeneous spaces, our result yields a general construction. It shows that it is 
enough to compute the dependence of the deformation parameters according to \eqref{eq:action.condition}. 
For instance, Varilly in \cite{V2001} concluded that the some of the generators of the odd dimensional noncommutative spheres commute with everything else, 
which is consistent with our result since $x_4$ in the centre. On the other hand, the noncommutative 3-sphere $S^3_{\theta'}$ is not a homogeneous space unless $\theta=0$ 
while $S^5_{\lambda}$ is only a homogeneous space in the sense that the fixed-point algebra for the $SU(3)_\theta$ action is trivial in the present work. Indeed, the natural 
left action by $SU(2)$ on $SU(3)$ given by 
\begin{align}
\rho\left(u_{ij}\right)=\begin{cases}
\alpha u_{1j}+\beta u_{2j} & i=1\\
-\bar{\beta}u_{1j}+\alpha u_{2j} & i=2\\
u_{3j} & i=3
\end{cases}
\end{align}
does not extend on $SU(3)_\theta$. Simply, $\rho(u_{11}u_{12})\neq\rho(e^{-2\pi i\theta}u_{12}u_{11})$.
Varilly's method differs from ours. Rather than computing the invariant elements, Varilly endows $C(G/K)$ with a new product consistent in a way 
that it is embedded in $C(G)_\theta$. We took a more direct approach to endow $C(G)$ with the natural action by $C(K)$ and studied the extent to which 
the original action is an action in the $\theta$-deformation context. 
In that sense, it does not make sense to consider $S^5_{\lambda}$ as an embeddable homogeneous space in $SU(3)_\theta$ for $SU(2)$.

The ubiquity of the $\theta$-deformation in noncommutative geometry is already familiar.
While the present work shows merely an example of symmetry in $\theta$-deformation, 
it explains the pervasiveness of symmetry in the $\theta$-deformation. There are already numerous research in mathematical physics using a 
toric noncommutative manifold as models, whether they are realized as $\theta$-deformations or not. We believe that our work has potential 
to reveal symmetries of toric noncommutative manifolds, and therefore, it has potential to be used in numerous areas of quantum physics.

Mathematically, it would be an interesting problem to classify up to Morita equivalence or isomorphism these objects. In fact, very little is known about the 
Morita equivalence of the $\theta$-deformations other than the case of the noncommutative 2-torus. 

\section*{Acknowledgment}
I would like to gratefully thank the organizers for organizing XXII Coloquio Latinoamericano de \'Algebra, 2017 at Pontificial Universidad cat\'olica del Ecuador 
where I was able to advance my work. I would like to thank Sylvie Paycha for some fruitful discussions at this conference.

\end{document}